\newcommand{\ZZ}{\mathbf{Z}}
\newcommand{\CC}{\mathbf{C}}
\newcommand{\QQ}{\mathbf{Q}}
\newcommand{\RR}{\mathbf{R}}
\newcommand{\JJ}{\mathcal{J}}
\newcommand{\el}{\ell}
\newcommand{\qa}{\quad}
\newcommand{\vp}{\varphi}
\newcommand{\Ve}{\Vert  }
\newcommand{\bfone}{\mathbf{1}}
\newcommand{\asym}{\mathrm{Asymp}}
\newcommand{\dasym}{\mathrm{Asymp}'}
\newcommand{\noi}{\noindent}
\providecommand{\inp}[1]{\langle#1\rangle}
\theoremstyle{plain}
\newtheorem{theorem}{Theorem}[section]
\newtheorem{lemma}[theorem]{Lemma}
\newtheorem{definition}[theorem]{Definition}
\newtheorem{example}[theorem]{\textnormal{\textbf{Example}}}
\newtheorem{Standard Process}[theorem]{Procedures}
\theoremstyle{remark}
\newtheorem{remark1}[theorem]{Remark}
\DeclareMathOperator{\jump}{Jump}
\DeclareMathOperator{\inter}{int}
\begin{document}

\title{Cluster points of jumping numbers of \\ toric plurisubharmonic functions}

\author{ Hoseob Seo }

\date{}

\maketitle

\begin{abstract}

\noindent  We show that the set of cluster points of jumping numbers of a toric plurisubharmonic function in $\CC^n$ is discrete for every $n \ge 1$. We also give a precise characterization of the set of those cluster points. These generalize a recent result of D. Kim and H. Seo from $n=2$ to arbitrary dimension. Our method is to analyze the asymptotic behaviors of Newton convex bodies associated to toric plurisubharmonic functions.

\end{abstract}

\section{Introduction}

In recent years, multiplier ideals and their jumping numbers played important roles in algebraic geometry (see \cite{D}, \cite{D11}, \cite{ELSV},  \cite{L}, \cite{S98}). They are crucial tools for measuring singularities of objects such as ideals, divisors and, more generally, plurisubharmonic  functions. A real number $c > 0$ is said to be a jumping number of a plurisubharmonic function $\vp$ at $x \in \CC^n$  if the multiplier ideal $\JJ(c \vp)$ with coefficient $c$ is strictly contained in $\JJ((c-\epsilon) \vp)$ for every  $0< \epsilon < c$ at $x$.

Our main interest lies in the possible failure of discreteness of jumping numbers when a psh function does not have analytic singularities (i.e. ones arising from ideals). In other words, a priori, there can exist  cluster points (i.e. accumulation points) of jumping numbers in the set of real numbers, due to the lack of an analogue of Hironaka resolution in such  generality.  Indeed, explicit examples were found by \cite[(5.10)]{ELSV} and later by \cite{GL}, both of which are toric psh functions in dimension $2$ \footnote{The one in \cite{ELSV} is given in terms of a graded system of ideals, which can be converted to a psh function as discussed in \cite{KS}.}.

A psh function $\vp$ is  {\it toric} if $\vp (z_1, \ldots, z_n) = \vp (|z_1|, \ldots, |z_n|)$, i.e.  their values depend only on the absolute values of coordinates. Recently, \cite{KS} generalized the above two examples to examining all toric psh functions in dimension $2$ : \cite[Theorem 5.7]{KS} determined when such a cluster point exists and then identified what are those cluster points. In this paper, we generalize those results to arbitrary dimension by establishing the following:

\begin{theorem}[Main theorem, Theorem~\ref{mainthm}]
 Let $\vp$ be a toric plurisubharmonic function on a polydisk $D(0,r) \subset \CC^n$. Then the set of cluster points of jumping numbers of $\vp$ at the origin is discrete. Moreover, the set of cluster points of jumping numbers of $\vp$ is given by
    $$
    \{ m \in \RR_+ : \text{there exists $A \in \dasym(m\vp)$ which does not intersect with $P(m\vp)$}\},
    $$
 where $P(m\vp)$ is the Newton convex body of $m \vp$ and $\dasym(m\vp)$ is defined in the next section.
\end{theorem}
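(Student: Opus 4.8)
The plan is to translate the problem into the combinatorics of the Newton convex body $P := P(\vp) \subset \RR^n_{\geq 0}$, to study its rescalings through the identity $P(m\vp) = mP$, and to match cluster points of jumping numbers with genuine asymptotic flats of $mP$. \emph{Reduction to lattice points on boundaries of rescaled Newton bodies.} Recall the toric (``Howald-type'') description of multiplier ideals used in \cite{KS}: for every $c>0$ the ideal $\JJ(c\vp)$ is monomial and $z^a \in \JJ(c\vp)$ if and only if $a+\bfone \in \interior P(c\vp) = c\,\interior P$, where $a \in \ZZ^n_{\geq 0}$. Since $\vp$ is toric and plurisubharmonic on a polydisk, $P$ is closed, convex, contained in $\RR^n_{\geq 0}$ and upward closed under translation by $\RR^n_{\geq 0}$, so its recession cone is exactly $\RR^n_{\geq 0}$ and $c \mapsto cP$ is decreasing. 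Hence every shifted lattice point $p$ in $L := \bfone + \ZZ^n_{\geq 0}$ has a well-defined threshold $c_p := \sup\{c>0 : p \in \interior(cP)\} \in (0,\infty]$, with $p \in \partial(c_p P)$ whenever $c_p < \infty$, and a direct check against the definition shows that the set of jumping numbers of $\vp$ is exactly $\{c_p : p \in L,\ c_p < \infty\}$: indeed $c$ is a jump precisely when some $z^a$ lies in $\JJ((c-\epsilon)\vp)$ for all small $\epsilon>0$ but not in $\JJ(c\vp)$, i.e. $c = c_{a+\bfone}$.

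\emph{Cluster points are genuine asymptotes.} Consequently $m$ is a cluster point of the jumping numbers if and only if there are distinct $p_k \in L$ with $c_{p_k} \to m$ and $c_{p_k} \neq m$. Such a sequence forces $|p_k| \to \infty$, and, putting $q_k := p_k/c_{p_k} \in \partial P$ and passing to subsequences, $p_k/|p_k| \to v$ for some unit vector $v$ in the recession cone $\RR^n_{\geq 0}$, with support $S := \{i : v_i > 0\} \subsetneq \{1,\dots,n\}$. This is exactly where the asymptotic analysis of Newton bodies promised in the abstract is used: one shows that along such escaping sequences the boundary of $P$, hence of $P(m\vp) = mP$, is controlled by one of the asymptotic affine flats recorded in $\dasym(m\vp)$; that this flat is a translate of $\Span\{e_i : i \in S\}$ passing through the shifted lattice $L$; and that the $p_k$ escaping to infinity \emph{alongside} it --- rather than eventually lying on a boundary face of $mP$ --- is precisely the condition that this flat be disjoint from $P(m\vp)$. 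Conversely, given $A \in \dasym(m\vp)$ with $A \cap P(m\vp) = \emptyset$, one produces a witness by taking $p_k = p_0 + kw$ for $p_0 \in A \cap L$ and a primitive lattice vector $w$ parallel to $A$, and checking that $c_{p_k} \nearrow m$ strictly because $A$ is an approached-but-unattained asymptote of $mP$. Combining the two implications gives the stated characterization.

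\emph{Discreteness.} It then remains to bound the set of cluster points by a finite union of discrete sets. There are finitely many faces $F_S$ ($S \subsetneq \{1,\dots,n\}$) of the recession cone $\RR^n_{\geq 0}$, and for each one the asymptotic behaviour of $P$ in the $F_S$-direction pins down a position $\alpha_S$ in the coordinates indexed by $S^c$; the corresponding asymptotic flat of $mP$ is then a translate of $\Span\{e_i : i \in S\}$ with $S^c$-position $m\alpha_S$, and whether it meets $mP$ is a scaling-invariant property of $P$, independent of $m$. When it does not meet $P$, it lies in $\dasym(m\vp)$ --- and so can witness a cluster point --- exactly when it passes through $L$, i.e. when $m\alpha_S \in \bfone + \ZZ^{S^c}_{\geq 0}$; this confines $m$ to the intersection of finitely many arithmetic progressions, a discrete (possibly empty) subset of $\RR_+$. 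Taking the union over the finitely many relevant $S$ gives discreteness, and for $n=2$ this recovers \cite[Theorem 5.7]{KS}.

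\emph{Where the difficulty lies.} The technical heart is the second step: defining $\asym$ and $\dasym$ so that the boundary of $P(m\vp)$ along a sequence escaping in a given $F_S$-direction is genuinely governed by a single affine flat, and then proving the two-sided equivalence between ``a sequence of shifted lattice points escaping alongside the boundary'' and ``a flat in $\dasym(m\vp)$ disjoint from $P(m\vp)$''. This will require an induction on $|S|$ (peeling off coordinate directions one at a time), a careful treatment of the cases in which the asymptotic face drops in dimension, and the elementary but essential fact that a discrete set can approach an affine flat at infinity only in directions parallel to that flat --- which is what forces the transverse position $m\alpha_S$ to be integral. Once this structure theory is in place, the first and third steps are routine bookkeeping.
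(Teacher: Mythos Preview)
Your reduction to thresholds $c_p$ for $p\in L=\bfone+\ZZ^n_{\ge 0}$ and the identity $P(m\vp)=mP$ are fine, and the qualitative picture in the second paragraph matches the paper's Lemma~\ref{ClusterToAffine}. The genuine gap is in your discreteness argument.

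You assert that for each proper $S\subsetneq\{1,\dots,n\}$ the asymptotic behaviour of $P$ in the $F_S$-direction ``pins down a position $\alpha_S$'' in the $S^c$-coordinates, and hence that the possible cluster points with escape direction $S$ lie in the arithmetic progression $\{m:m\alpha_S\in\bfone+\ZZ^{S^c}_{\ge0}\}$. This is only correct when $|S|=n-1$. When $|S^c|\ge 2$, the set of $S^c$-positions at which the coordinate flat parallel to $\Span\{e_i:i\in S\}$ is asymptotic to $P$ is not a single point but the boundary of a lower-dimensional convex body (essentially the projection of $P$ onto $\RR^{S^c}$); see the paper's Example~\ref{3dimexample} for a concrete instance. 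Since that boundary is a continuum, the condition ``$m\cdot(\text{some boundary position})\in\ZZ^{S^c}_{>0}$'' imposes no discreteness on $m$ at all, and your union-of-arithmetic-progressions conclusion collapses. Your acknowledged ``induction on $|S|$'' and ``cases in which the asymptotic face drops in dimension'' are not peripheral technicalities here; they are exactly what is needed, and the sketch in your third paragraph does not supply them.

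The paper's proof avoids this trap by separating the two roles of asymptotic flats. For discreteness it works not with $\dasym$ but with $\asym$, where condition~(3) forces maximality; Lemma~\ref{ClusterToAffine} produces an \emph{injective} map $\Phi$ from cluster points into $\bigcup_{0<m\le M}\asym(m\vp)$, and finiteness of this union is obtained by a Dickson-type argument: an infinite family of $k$-dimensional flats satisfying $(\star)$ must share a fixed coordinate and hence sit inside a higher-dimensional flat satisfying $(\star)$, contradicting maximality. Only afterwards, for the explicit description of the cluster set, does the paper pass back down to $\dasym$ by showing that whenever $\Phi(m)$ meets $P(m\vp)$ one can descend to a smaller-dimensional flat that does not. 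Your single-$\alpha_S$ picture conflates these two steps.
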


 \noindent In particular, this says that there is no ``cluster point of cluster points" of jumping numbers for toric psh functions.

  In general, the existence of a cluster point of jumping numbers depends on the asymptotic behavior of the Newton convex bodies.  When the dimension is $2$ in \cite{KS}, the situation was quite simple since there were only two axis lines that can be asymptotic to a given Newton convex body. However in arbitrary dimension, the asymptotic behaviors of the Newton convex bodies become much more complicated. In Section~\ref{PropertiesNewtonConvex}, we present some definitions and properties which are useful to investigate the asymptotic behaviors of the Newton convex bodies. In Section~\ref{mainsection}, we prove the main theorem, Theorem \ref{mainthm}, using Lemma~\ref{ClusterToAffine}. \\

\noi \textbf{Acknowledgements.}

The author would like to thank Dano Kim for his helpful and valuable comments.
This research was supported by BK21 PLUS SNU Mathematical Sciences Division.

\section{Newton convex bodies of plurisubharmonic functions} \label{PropertiesNewtonConvex}

In \cite{Ho}, \cite{G}, \cite{R}, multiplier ideals of monomial ideals and toric plurisubharmonic (psh, for short) functions are characterized by their Newton convex bodies in $\RR^n_+$. In this paper, we use the following notations:
\begin{enumerate}[leftmargin=*]
\item $\ZZ^n_+$, $\QQ^n_+$ and $\RR^n_+$ stand for the set of $n$-tuples of nonnegative integers, rational numbers and real numbers respectively.
\item For two $n$-tuples $(x_1,\ldots, x_n), (y_1, \ldots, y_n) \in \RR^n$, write $(x_1,\ldots, x_n) < (y_1, \ldots, y_n)$ (resp. $(x_1,\ldots, x_n) \le (y_1, \ldots, y_n)$) for the case when $x_i < y_i$ (resp. $x_i \le y_i$) for all $i$.
\end{enumerate}

To make clear, we recall the definition of affine subspaces in $\RR^n$, which will be frequently used in this paper. We refer to \cite{H} for general properties of affine subspaces.

\begin{definition}[\cite{H}]
A subset $A$ of $\RR^n$ is called a $k$-dimensional affine subspace if
$$
A + (-x_0) = \{ x - x_0 : x \in A\}
$$
is a $k$-dimensional linear subspace of $\RR^n$ for some (and thus arbitrary) $x_0 \in A$. 
\end{definition}

Since one of the most important properties of Newton convex bodies $P$ is the fact that $P + \RR^n_+$ is contained in $P$, we develop some useful properties of convex subsets $P$ in $\RR^n_+$ satisfying $P + \RR^n_+ \subseteq P$. To investigate the asymptotic behavior of the boundary of $P$ far from the origin, we consider the following definition.

\begin{definition} \label{asymptotic}
Let $P$ be a convex subset of $\RR^n_+$ satisfying $P + \RR^n_+ \subseteq P$. A $k$-dimensional affine subspace $A$ is said to be {\it asymptotic to $P$} if $A \cap \inter P = \emptyset$ and $d(A,P) = 0$, where $\inter P$ is the interior of $P$ in $\RR^n$ and $d(X,Y) = \inf \{ \Ve x-y \Ve : x \in X, y \in Y \}$ for any subsets $X,Y \subseteq \RR^n$.
\end{definition}

For each cluster point of jumping numbers $\lambda$,  we can find an affine subspace $A$ associated to $\lambda$ (see Lemma~\ref{ClusterToAffine}). To prove the discreteness of cluster points of jumping numbers, it should be possible to choose $A$ to be of maximal dimension among affine subspaces associated to $\lambda$.

\begin{definition} \label{DefAffine1}
Let $\asym_{n-1}(P)$ be the set of all $(n-1)$-dimensional affine subspaces $A$ in $\RR^n$ satisfying
\begin{enumerate}[leftmargin=*]
    \item $A$ is given by an equation $x_i = a$ for some $i \in \{1,\ldots, n\}$ and some positive integer $a$,
    \item $A$ is asymptotic to $P$.
\end{enumerate}
\end{definition}

\begin{definition} \label{DefAffine2}
For $1 \le k \le n-2$, define $\asym_{k}(P)$ inductively as the set of all $k$-dimensional affine subspaces $A$ satisfying
\begin{enumerate}[leftmargin=*]
    \item[(1)] $A$ is given by equations $x_{i_1} = a_1, \ldots, x_{i_{n-k}} = a_{n-k}$ where $i_1, \ldots, i_{n-k}$ are distinct elements of $\{1,\ldots, n\}$ and $a_1,\ldots, a_{n-k}$ are positive integers,
    \item[(2)] $A$ is asymptotic to $P$,
    \item[(3)] $A$ is not contained in any affine subspaces in $\asym_m(P)$ if $m > k$.
\end{enumerate}

 We also let $\displaystyle \asym(P) := \bigcup_{k=1}^{n-1} \asym_k (P)$.
\end{definition}

We will say that an affine subspace satisfies the condition ($\star$) for $P$ if it satisfies only the conditions (1) and (2) for $P$ in Definition~\ref{DefAffine2}: \\

\begin{enumerate}[leftmargin=*]
  \item[($\star$)] \parbox{\dimexpr\textwidth-\leftmargin-\itemindent}{ 
\begin{enumerate} 
    \item[(1)] $A$ is given by equations $x_{i_1} = a_1, \ldots, x_{i_{n-k}} = a_{n-k}$, where $i_1, \ldots, i_{n-k}$ are distinct elements of $\{1,\ldots, n\}$ and $a_1,\ldots, a_{n-k}$ are positive integers,
    \item[(2)] $A$ is asymptotic to $P$.
\end{enumerate}
  }
\end{enumerate}

\begin{remark1}
Note that in the above definitions $a$ and $a_j$'s are positive integers. Therefore if a convex subset $P$ is given by
$$
P = \{ (x,y,z) \in \RR^3 : xy=1 \text{ and } x,z \ge 0 \},
$$
then the affine subspaces $x=0$ and $y=0$ are asymptotic to $P$ but they does not belong to $\asym_2 (P)$.
\end{remark1}

Additionally, in order to describe the set of cluster points of jumping numbers, we have to take an affine subspace whose dimension is minimal among affine subspaces associated to a cluster point of jumping numbers. For this purpose, define $\dasym_k(P)$ as the set of all $k$-dimensional affine subspaces in $\RR^n$ satisfying the condition $(\star)$. Similarly as above, let  $$\dasym(P) := \bigcup_{k=1}^{n-1} \dasym_k(P) .$$

\begin{example} \label{3dimexample}
Let $Q$ be a subset of $\RR^2_+$ given by
$$
Q = \left\{ (x,y) \in \RR^2 : x>1, y>1 \text{ and } y \ge \frac{1}{x-1} + 1 \right\}.
$$
For each $\epsilon > 0$, we denote by $Q_\epsilon$ the intersection of $(1-\epsilon)Q$ and $\bfone + \RR^2_+$.
Now let $P$ be the set in $\RR^3_+$ given by
$$
P = \bigcup_{0 \le \epsilon \le 1} \{ (x,y,1+\epsilon) \in \RR^3_+ : (x,y) \in Q_\epsilon \} \cup ((1,1,2) + \RR^3_+).
$$
Then $P$ is a convex subset in $\RR^3_+$ satisfying $P + \RR^3_+ \subseteq P$. The affine plane given by $x=1$ belongs to $\asym(P)$.  On the other hand, two affine line $\{(x,1,1): x \in \RR \}$ and $\{(1,y,1) : y \in \RR \}$ belong to $\dasym(P)$ but not to $\asym(P)$.
\end{example}

\begin{lemma} \label{LargestAsymp}
Let $P$ be a convex subset of $\RR^n_+$  satisfying $P + \RR^n_+ \subseteq P$ and $A$ an affine subspace asymptotic to $P$ on which at least one of coordinates of $\RR^n$ is constant. If $A'$ is the affine subspace given by coordinates which are constant on $A$, then $A'$ is asymptotic to $P$.
\end{lemma}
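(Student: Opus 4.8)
I would verify the two conditions in Definition~\ref{asymptotic} for $A'$ separately. Write $I$ for the set of coordinates of $\RR^n$ that are constant on $A$, say $x_i\equiv c_i$ $(i\in I)$, put $L=\{1,\dots,n\}\setminus I$, $c_I=(c_i)_{i\in I}$, and $M=\pi_L(A)\subseteq\RR^L$, so $A=\{c_I\}\times M$ and $A'=\{c_I\}\times\RR^L\supseteq A$. The distance condition is free: $d(A',P)\le d(A,P)=0$ since $A\subseteq A'$. The content is to show $A'\cap\inter P=\emptyset$, which I argue by contradiction (we may assume $\inter P\ne\emptyset$).

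Suppose $q=(c_I,q_L)\in A'\cap\inter P$. From $P+\RR^n_+\subseteq P$ one gets $q+\RR^n_+\subseteq\inter P$ by translating a ball about $q$ by any vector of $\RR^n_+$; keeping only the translations supported on $L$, this yields $\{c_I\}\times(q_L+\RR^L_+)\subseteq A'\cap\inter P$. Hence it would suffice to produce a point of $M$ inside $q_L+\RR^L_+$ — for then $A\cap\inter P\ne\emptyset$, contradicting that $A$ is asymptotic. So the target reduces to
\[
M\cap(q_L+\RR^L_+)\neq\emptyset \quad\text{for every } q=(c_I,q_L)\in A'\cap\inter P .
\]
Equivalently, in the language of separation: $A$ and $\inter P$ are disjoint convex sets, one open, so some hyperplane $\{\nu\cdot x=c\}$ separates them; monotonicity forces $\nu\ge 0$, and since $\nu\cdot x$ is bounded above on the affine subspace $A$ it is constant there, equal to $c$ once one uses $d(A,P)=0$, so $A\subseteq\{\nu\cdot x=c\}$ and $\inter P\subseteq\{\nu\cdot x>c\}$; the point is to show $\nu$ may be chosen supported on $I$, for then $\nu\cdot x\equiv\nu\cdot c_I=c$ on all of $A'$ and $A'\cap\inter P=\emptyset$.

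This is where the hypotheses must be used decisively. Using that $A$ is asymptotic, pick $p_k\in P$ with $d(p_k,A)\to0$; replacing $(p_k)_i$ by $\max((p_k)_i,c_i)$ for $i\in I$ (which keeps $p_k\in P$ and $d(p_k,A)\to0$), and writing the nearest points of $A$ as $(c_I,m_k)$ with $m_k\in M$, we get $\pi_L(p_k)\to m_k$; since $p_k\in\RR^n_+$, the $m_k$ stay nonnegative up to $o(1)$, so every direction along which $\|m_k\|\to\infty$ lies in $\vec M\cap\RR^L_+$. Together with the hypothesis that $M$ has no constant coordinate — which is exactly what ``no coordinate outside $I$ is constant on $A$'' says, forbidding $\nu|_L$ (which is $\ge 0$ and orthogonal to $\vec M$, hence to these recession directions) from being a single basis vector — and with $P+\RR^n_+\subseteq P$, one should be able to trim $\nu$ down to the constant coordinates, peeling off one coordinate of $L$ at a time by induction on $\dim A$. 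The main obstacle is precisely this step — equivalently, verifying $M\cap(q_L+\RR^L_+)\neq\emptyset$ above; it cannot be done without essentially using both $P\subseteq\RR^n_+$ and $P+\RR^n_+\subseteq P$, whereas the remaining ingredients (the ball translations, the coordinate clamping, the separation theorem) are routine.
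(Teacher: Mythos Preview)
Your reduction coincides with the paper's. After the free distance estimate, the paper assumes $A'\cap\inter P\neq\emptyset$, picks $(x',x'')\in\inter P$ with $x'<c_I$ (by openness), and then asserts that one may choose $x''$ with $(c_I,x'')\in A$ and ``$|x''|$ sufficiently large'', so that $(c_I,x'')\in(x',x'')+\RR^n_+\subseteq\inter P$, contradicting $A\cap\inter P=\emptyset$. Unwound, this is precisely your target $M\cap(q_L+\RR^L_+)\neq\emptyset$; the paper's only justification is the closing sentence ``such $x''$ can be taken because $A'$ strictly contains $A$'', i.e.\ $M$ is positive-dimensional and hence unbounded in norm.

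You are right to isolate this as the obstacle, and in fact it cannot be closed under the stated hypotheses. Take $n=3$, $P=\{(x,y,z)\in\RR^3_+:x+y+z\ge1\}$, and $A=\{(1,t,-t):t\in\RR\}$. Then $P$ is convex with $P+\RR^3_+\subseteq P$; $A$ is asymptotic to $P$ (it meets $P$ only at the boundary point $(1,0,0)$, and never $\inter P$ since $t>0$ and $-t>0$ are incompatible); the sole constant coordinate on $A$ is $x_1$; yet $A'=\{x_1=1\}$ contains $(1,1,1)\in\inter P$. So the lemma as stated is false, and neither your recession-direction sketch nor the paper's one-line assertion can be turned into a proof. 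In the paper's only use of the lemma (inside Lemma~\ref{ClusterToAffine}), $A$ is the affine hull of an infinite sequence in $\bfone+\ZZ^n_+$ whose $L$-coordinates are unbounded in $\ZZ^L_+$; that extra structure is what makes $M\cap(q_L+\RR^L_+)\neq\emptyset$ hold, and with it both arguments succeed.
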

\begin{proof}
If $A'$ is equal to $A$, the conclusion is trivial. Assume that $A'$ strictly contains $A$. Furthermore, after rearranging the order of coordinates, we may assume that $A'$ is given by equations
$$
x_1 = a_1, \ldots, x_k = a_k.
$$
To show that $A'$ is asymptotic to $P$, assume on the contrary that $A'$ meets the interior of $P$. Let $(x',x'')$ be a point in the interior of $P$ such that $x' \in \RR^k$, $x'' \in \RR^{n-k}$ and $x' < a:= (a_1,\ldots, a_k)$. If $x''$ is chosen so that $|x''|$ is sufficiently large and $(a, x'')$ lies on $A$, then $(a,x'')$ is contained in $(x',x'') + \RR^n_+$, which implies that $(a, x'')$ is contained in the interior of $P$. This contradicts the fact that $A$ does not meet the interior of $P$. Note that such $x''$ can be taken because $A'$ strictly contains $A$.
\end{proof}

For a toric psh function $\vp$ on a polydisk $D(0,1) \subseteq \CC^n$, it is well-known that a function
$$
f(x_1, \ldots, x_n) := \vp (e^{x_1}, \ldots, e^{x_n})
$$
is a convex function on $\RR^n_-$ which is increasing in each variable, where $\RR^n_-$ is the set of n-tuples of non-positive real numbers. Then the Newton convex body of $\vp$, denoted by $P(\vp)$, is defined to be the domain of the Legendre transformation (or the convex conjugate) of the associated convex function of $\vp$ (see \cite[\S 2.2]{H} for more details). More explicitly, $P(\vp)$ is defined by
$$
P(\vp) = \{ \lambda \in \RR^n : \textstyle \sup_{x \in \RR^n_-}\{ \inp{\lambda, x} - f(x) \} < +\infty \}.
$$
Note that $P(\vp)$ is contained in $\RR^n_+$ and satisfies $P(\vp) + \RR^n_+ \subseteq P(\vp)$.

In this paper, we are concerned with the case when a convex subset $P$ is the Newton convex body $P(\vp)$ of a toric psh function $\varphi$. For simplicity of notation, we will write 
\begin{align*}
    \asym_k(P(\varphi)) = \asym_k(\varphi) \text{ (resp. $\dasym_k(P(\vp)) = \dasym_k(\vp)$),}  \\
    \asym(P(\varphi)) = \asym(\varphi) \text{ (resp. $\dasym(P(\vp)) = \dasym(\vp)$)}.    
\end{align*}

\section{Cluster points of jumping numbers of plurisubharmonic functions} \label{mainsection}

 First of all, since we are especially interested in the stalk of a multiplier ideal sheaf at the origin $0 \in \CC^n$, we rephrase the following theorem which was independently proved by Guenancia and Rashkovskii.
\begin{theorem}[{\cite[Theorem~1.13]{G}}, {\cite[Proposition~3.1]{R}}] \label{GueRash}
Let $\varphi$ be a toric plurisubharmonic function on $D(0,r) \subset \CC^n$. The stalk of the multiplier ideal sheaf $\JJ(\varphi)_0$ at the origin $0 \in \CC^n$ is a monomial ideal. Furthermore, a monomial $z^\alpha = z_1^{\alpha_1} \cdots z_n^{\alpha_n}$ is contained in $\JJ(\varphi)_0$ if and only if
$$
\alpha + \mathbf{1} \in \inter P(\varphi)
$$
where $\mathbf{1} = (1,\ldots, 1) \in \ZZ^n$.
\end{theorem}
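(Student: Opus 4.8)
The plan is to argue directly from the analytic (Nadel) description of the multiplier ideal: a germ $h\in\OO_{\CC^n,0}$ belongs to $\JJ(\vp)_0$ exactly when $|h|^2e^{-2\vp}$ is integrable on some polydisk $D(0,\epsilon)$, and, since $\vp$ is toric, we may take $\epsilon$ small enough that $D(0,\epsilon)$ is invariant under the coordinatewise rotations $z_j\mapsto e^{\sqrt{-1}\,\theta_j}z_j$. For monomiality, write $h=\sum_\alpha c_\alpha z^\alpha$; averaging $|h|^2$ over the torus and using orthogonality of characters gives $\int_{[0,2\pi)^n}|h(e^{\sqrt{-1}\theta_1}z_1,\dots,e^{\sqrt{-1}\theta_n}z_n)|^2\,d\theta=(2\pi)^n\sum_\alpha|c_\alpha|^2|z^\alpha|^2$. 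Because $\vp$ and $D(0,\epsilon)$ are torus invariant, Tonelli's theorem yields
$$
(2\pi)^n\int_{D(0,\epsilon)}\Big(\sum_\alpha|c_\alpha|^2|z^\alpha|^2\Big)e^{-2\vp}\,dV=(2\pi)^n\int_{D(0,\epsilon)}|h|^2e^{-2\vp}\,dV<\infty ,
$$
so $\int_{D(0,\epsilon)}|z^\alpha|^2e^{-2\vp}\,dV<\infty$ for every $\alpha$ with $c_\alpha\neq0$; i.e.\ every such $z^\alpha$ lies in $\JJ(\vp)_0$, which (by Dickson's lemma, grouping the terms of $h$) shows $\JJ(\vp)_0$ is a monomial ideal.

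For the membership criterion of a monomial $z^\alpha$, pass to polar coordinates $z_j=r_je^{\sqrt{-1}\theta_j}$ and then substitute $r_j=e^{x_j}$; the angular integrations factor out and
$$
\int_{D(0,\epsilon)}|z^\alpha|^2e^{-2\vp}\,dV=(2\pi)^n\int_{(-\infty,\log\epsilon)^n}e^{\,2\inp{\alpha+\bfone,\,x}-2f(x)}\,dx ,
$$
where $f(x)=\vp(e^{x_1},\dots,e^{x_n})$ is the associated convex function. Hence $z^\alpha\in\JJ(\vp)_0$ iff $\int_{(-\infty,c)^n}e^{\,2\inp{\alpha+\bfone,\,x}-2f(x)}\,dx<\infty$ for some (equivalently, by the monotonicity of $f$, every) $c<0$. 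If $\alpha+\bfone\in\inter P(\vp)$ then, since $P(\vp)+\RR^n_+\subseteq P(\vp)$, the point $\beta:=\alpha+\bfone-\delta\bfone$ lies in $P(\vp)$ for some $\delta>0$, so $f(x)\ge\inp{\beta,x}-M$ on $\RR^n_{\le0}$ for some $M$; inserting this bound makes the exponent $\le 2\delta\inp{\bfone,x}+2M$, which is integrable over $(-\infty,0)^n$. This gives one implication.

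The converse is the heart of the matter. Suppose $\alpha+\bfone\notin\inter P(\vp)$. Using $P(\vp)+\RR^n_+\subseteq P(\vp)$ once more, this says $\alpha+\bfone-\delta\bfone\notin P(\vp)$ for all $\delta>0$, i.e.\ the concave function $h(x):=\inp{\alpha+\bfone,x}-f(x)$ satisfies $\sup_{x\le0}\big(h(x)+\delta|x|_1\big)=+\infty$ for every $\delta>0$ — $h$ fails to decay at any uniform linear rate into the negative orthant. Its recession slope $v\longmapsto\inf_{t>0}\big(h(x_0+tv)-h(x_0)\big)/t$, which is independent of the base point $x_0$ and upper semicontinuous and positively homogeneous on $\RR^n_{\le0}$, must therefore be $\ge0$ at some unit vector $v\le0$; in the contrapositive this uses a Dini–compactness argument on the compact set of downward unit directions, together with the monotonicity of $f$ to propagate a uniform linear decay from $x_0+\RR^n_{\le0}$ to all of $\RR^n_{\le0}$. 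Fixing such a $v$ and the base point $x_0=(c-1)\bfone$, we get $h(x_0+tv)\ge h(x_0)$ for all $t\ge0$, so $\{x:h(x)\ge h(x_0)-1\}$ is a convex subset of $(-\infty,c)^n$ containing a small ball about $x_0$ together with the entire ray $x_0+\RR_{\ge0}v$; such a set contains a half-infinite cylinder, hence has infinite Lebesgue measure, while $e^{2h}\ge e^{2h(x_0)-2}>0$ there, forcing the integral to diverge. Together with the previous two steps this shows $z^\alpha\in\JJ(\vp)_0$ iff $\alpha+\bfone\in\inter P(\vp)$.

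I expect this last step to be the genuine obstacle: turning ``$\alpha+\bfone$ is not interior'' into a \emph{quantitative} failure of linear decay of $h$, and then exhibiting an explicit infinite-measure region on which $e^{2h}$ stays bounded below, is exactly where convexity is indispensable — one must rule out the a priori scenario in which $h$ decreases along every downward ray but only sublinearly, and it is the recession function together with compactness of the set of directions that forbids this.
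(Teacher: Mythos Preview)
The paper does not prove this statement at all: Theorem~\ref{GueRash} is quoted from \cite{G} and \cite{R} and used as a black box, so there is no ``paper's own proof'' to compare your attempt against.

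That said, your argument is essentially correct and follows the standard route. The torus--averaging step for monomiality is fine. For the membership criterion, the forward implication is clean; the converse is where the real content lies, and your sketch is sound but compressed. The cleanest way to justify the key claim---that some unit $v\le 0$ has recession slope $r(v)\ge 0$---is to note that $r(v)=\inp{\alpha+\bfone,v}-f_\infty(v)$ and that, for a closed convex $f$, the recession function $f_\infty$ equals the support function of $\text{dom}\,f^*=P(\vp)$; then the existence of such a $v$ is exactly the separating/supporting hyperplane for $\alpha+\bfone\notin\inter P(\vp)$, with $v\le 0$ forced by $P(\vp)+\RR^n_+\subseteq P(\vp)$. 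Your alternative ``Dini--compactness'' contrapositive also works, but one must be slightly careful: upper semicontinuity of $r$ on the compact simplex $\{v\le 0:\ |v|_1=1\}$ gives a uniform bound $r\le -2\epsilon_0$, and one should then run the Dini--type argument on the tilted concave function $g(x)=h(x)+\epsilon_0|x-x_0|_1$ (not on $h$ itself) to conclude $g$ is bounded above on $x_0+\RR^n_{\le 0}$. The extension from $x_0+\RR^n_{\le 0}$ to all of $(-\infty,c)^n$ via monotonicity of $f$ is the step you allude to: for general $x$ set $x'=\min(x,x_0)$ componentwise and use $f(x')\le f(x)$ together with $|x-x'|_\infty\le |c\bfone-x_0|_\infty$ to transfer the bound. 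Once $v$ is in hand, your half--infinite--cylinder argument for divergence of the integral is correct as written.
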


Before we prove the main theorem, we shall introduce the following lemma. This lemma and an argument used in its proof are crucial for the proof of Theorem~\ref{mainthm}.

\begin{lemma} \label{ClusterToAffine}
Let $\lambda$ be a cluster point of $\jump(\vp)_o$ and $\{\lambda_j\}_{j=1}^{\infty}$ an increasing sequence of jumping numbers converging to $\lambda$. Then one can find an affine subspace $A_\lambda$ in $\asym(\lambda \varphi)$, a subsequence $\{\lambda_{j_k}\}_{k=1}^{\infty}$ of $\{\lambda_j\}$ and a sequence $\{x_k\}_{k=1}^{\infty}$ in $(A_\lambda + (-\bfone)) \cap \ZZ^n_+$ such that
$$
x_k + \mathbf{1} \in \inter P(\lambda_{j_k} \varphi) \quad \text{but} \quad x_k + \mathbf{1} \notin \inter P(\lambda_{j_k+1} \varphi).
$$
\end{lemma}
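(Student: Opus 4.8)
The plan is to exploit the monomial description of multiplier ideals (Theorem~\ref{GueRash}) to convert the infinitely many jumps $\lambda_j \to \lambda$ into infinitely many lattice points that are ``caught'' between the Newton bodies $P(\lambda_j\vp)$, and then to extract from these lattice points an affine subspace asymptotic to $P(\lambda\vp)$. First I would record that $P(c\vp) = c\,P(\vp)$, so $t \mapsto P(t\vp)$ is a monotone family of convex bodies, shrinking as $t$ increases, with $\inter P(\lambda_{j+1}\vp) \subseteq \inter P(\lambda_j\vp)$. For each $j$, since $\lambda_j$ is a jumping number of $\vp$ at the origin and the stalks $\JJ(\lambda_j\vp)_0 \subsetneq \JJ(\lambda_j'\vp)_0$ for $\lambda_j' < \lambda_j$ close to $\lambda_j$, Theorem~\ref{GueRash} gives a monomial $z^{\beta_j}$ with $\beta_j + \bfone \in \inter P(\lambda_j'\vp)$ but $\beta_j + \bfone \notin \inter P(\lambda_j\vp)$; combined with monotonicity in $t$, by choosing the $\lambda_j$ as a subsequence of the given increasing sequence I can arrange $\beta_j + \bfone \in \inter P(\lambda_{j}\vp)$ and $\beta_j + \bfone \notin \inter P(\lambda_{j+1}\vp)$ (relabelling indices). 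So the lattice points $x_j := \beta_j$ already satisfy the displayed inclusions; what remains is to pass to a further subsequence along which the $x_j$ lie on a common affine subspace of the required type and to show that subspace is asymptotic to $P(\lambda\vp)$.

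The second and main part is the extraction of $A_\lambda$. The key observation is that the $x_j$ cannot stay in a bounded region: if infinitely many $x_j$ lay in a fixed box, then since $x_j + \bfone \in \inter P(\lambda_j\vp)$ and $\inter P(\lambda_j\vp) \searrow \inter P(\lambda\vp)$ (here I use that $\bigcap_j \inter P(\lambda_j \vp)$ is $\inter P(\lambda\vp)$ up to boundary, more precisely that any point in $\inter P(t\vp)$ for all $t<\lambda$ lies in $P(\lambda\vp)$ and one gets a contradiction with $x_j + \bfone \notin \inter P(\lambda_{j+1}\vp)$ using convexity to produce a nearby interior point), we would find a fixed lattice point lying in every $\inter P(\lambda_j\vp)$ but not in $\inter P(\lambda_{j+1}\vp)$ for the relevant $j$ — impossible for infinitely many $j$. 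Hence, after a subsequence, $|x_j| \to \infty$. Now for each coordinate $i$, the sequence $\{(x_j)_i\}$ is either bounded or has a subsequence tending to $\infty$; passing to a subsequence I may assume each coordinate is either eventually constant (take it constant, equal to some nonnegative integer $a_i$, by a further subsequence since these are integers) or tends to $+\infty$, and at least one coordinate is of the latter type since $|x_j| \to \infty$; the former type is nonempty is not automatic — if all coordinates tend to infinity we only get a ``direction'', so here I need the refinement that among the coordinates that stay bounded there is at least one, OR, if none, I instead track which coordinates grow slowest; the cleanest route is to let $I \subseteq \{1,\dots,n\}$ be the set of coordinates bounded along the subsequence, set them to their eventual constant integer values $a_i$, $i \in I$, and let $A_\lambda$ be the affine subspace $\{x_i + 1 = a_i + 1 : i \in I\}$ of dimension $n - |I|$. (If $I = \emptyset$ one argues separately; but in fact a point with all coordinates large lying in $\inter P(\lambda_j\vp)$ and not in $\inter P(\lambda_{j+1}\vp)$ with $P(\lambda_{j+1}\vp) = \frac{\lambda_{j+1}}{\lambda_j}P(\lambda_j\vp)$ forces, by the cone property $P + \RR^n_+ \subseteq P$, that scaling toward the origin leaves the body, pinning the point near $\de P(\lambda_j\vp)$ close to the origin, contradicting $|x_j| \to \infty$ — so $I \neq \emptyset$.)

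Finally I would verify that the affine subspace $A_\lambda := \{\, x \in \RR^n : x_i + 1 = a_i + 1 \text{ for } i \in I \,\}$ (equivalently $x_i = a_i$ for $i\in I$, after the shift by $\bfone$) is asymptotic to $P(\lambda\vp)$: since $x_j + \bfone \in \inter P(\lambda_j\vp)$ and these interiors decrease to (a set whose closure is) $P(\lambda\vp)$, the distance from $A_\lambda + \bfone$ to $P(\lambda\vp)$ is $0$, because the points $x_j + \bfone$, which lie on $A_\lambda + \bfone$ (their $I$-coordinates are the $a_i+1$) converge in distance to $P(\lambda\vp)$; and $A_\lambda + \bfone$ does not meet $\inter P(\lambda\vp)$ because if it did, a point $p \in (A_\lambda + \bfone) \cap \inter P(\lambda\vp)$ would lie in $\inter P(\lambda_j\vp)$ for all large $j$ (as $\lambda_j < \lambda$ and interiors increase as $t$ decreases), and then, using that $p$ and $x_j + \bfone$ share their $I$-coordinates while the free coordinates of $x_j+\bfone$ can be taken large, the cone property $\inter P + \RR^n_+ \subseteq \inter P$ would force $x_j + \bfone \in \inter P(\lambda_{j+1}\vp)$ for $j$ large — here one moves from $p$ in the positive free directions to dominate $x_j + \bfone$, staying inside $\inter P(\lambda_{j+1}\vp)$ once $\lambda_{j+1}$ is close enough to $\lambda$ — contradicting the defining property of $x_j$. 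This places $A_\lambda$ in $\asym(\lambda\vp) = \bigcup_k \asym_k(\lambda\vp)$: it satisfies conditions (1) and (2) of Definition~\ref{DefAffine2}, and if it fails (3) we replace it by the larger asymptotic subspace it is contained in (which still contains all the $x_j + \bfone$), invoking Lemma~\ref{LargestAsymp} to pass to the maximal such coordinate-subspace; the resulting subspace lies in some $\asym_m(\lambda\vp)$ and still has the required sequence $\{x_k\}$ in its $(-\bfone)$-translate. The main obstacle is the $I = \emptyset$ case and, more generally, ensuring after all the subsequence extractions that $A_\lambda$ is genuinely coordinate-affine with \emph{integer} constants and asymptotic — i.e. controlling the interplay between ``$|x_j|\to\infty$'', the cone property, and the scaling $P(\lambda_{j+1}\vp) = \tfrac{\lambda_{j+1}}{\lambda_j}P(\lambda_j\vp)$; this is where the convexity of $P(\vp)$ and the relation $P(\vp)+\RR^n_+\subseteq P(\vp)$ must be used carefully.
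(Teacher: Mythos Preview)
Your approach is essentially the same as the paper's: extract lattice points $\alpha_j$ via Theorem~\ref{GueRash} witnessing each jump, pass to a subsequence on which some coordinate is constant, form the associated coordinate-affine subspace, check it is asymptotic to $P(\lambda\vp)$, and then invoke Lemma~\ref{LargestAsymp} to enlarge it to a maximal such subspace lying in $\asym(\lambda\vp)$. Your explicit verification that the resulting subspace is asymptotic is in fact more detailed than the paper's (which simply asserts it), and the idea there is correct once you perturb the hypothetical interior point $p$ to $p - \epsilon\bfone$ before applying the cone property.

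The one genuine weak spot is your treatment of the case $I = \emptyset$ (all coordinates of $x_j$ tending to infinity). Your heuristic---``scaling toward the origin leaves the body, pinning the point near $\partial P(\lambda_j\vp)$ close to the origin, contradicting $|x_j|\to\infty$''---does not hold up as written: the boundary of $P(\lambda_j\vp)$ is unbounded, and nothing in the relation $P(\lambda_{j+1}\vp) = \tfrac{\lambda_{j+1}}{\lambda_j}P(\lambda_j\vp)$ forces $x_j + \bfone$ near the origin. The paper's device is cleaner and is what you should use: if the sequence $\{\alpha_j\}\subset\ZZ^n_+$ were strictly increasing in the coordinatewise order, then every coordinate of $\alpha_j$ would tend to $+\infty$, so for any fixed $q \in \inter P(\lambda\vp)$ one has $\alpha_j + \bfone > q$ for large $j$; by the cone property this gives $\alpha_j + \bfone \in \inter P(\lambda\vp) \subseteq \inter P(\lambda_{j+1}\vp)$, contradicting the choice of $\alpha_j$. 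In your language this is exactly the statement $I \neq \emptyset$. With this correction your argument is complete and coincides with the paper's.
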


\begin{proof}
For each $j \ge 1$, one can find a monomial $z^{\alpha_j} \in \JJ(\lambda_j \varphi)$ which is not contained in $\JJ(\lambda_{j+1} \varphi)$. By Theorem~\ref{GueRash}, it gives a sequence $\{\alpha_j = (\alpha_{j,1}, \ldots, \alpha_{j,n})\}_{j=1}^{\infty}$ in $\ZZ^n_+$ satisfying $\alpha_j + \bfone \in (\inter P(\lambda_j \varphi)) \backslash (\inter P(\lambda_{j+1} \varphi) )$. If the sequence $\{\alpha_j\}$ is strictly increasing, that is $\alpha_1 < \alpha_2 < \cdots$, then we have $\alpha_j \in \inter P(\lambda \varphi)$ for sufficiently large $j$. This cannot happen. With this fact since all $\alpha_j$ are contained in $\ZZ^n_+$, one can find a subsequence $\{ x_k \}_{k=1}^{\infty}$ of $\{\alpha_j\}$ in which some coordinate of each element is constant. Let $\lambda_{j_k}$ be the jumping number of $\varphi$ corresponding to $x_k$ and let $A$ be the affine hull of the sequence $\{x_k + \bfone \}$. Then $A$ is asymptotic to $P(\lambda \vp)$. By Lemma~\ref{LargestAsymp}, the affine subspace $A'$ given by coordinates which are fixed on $A$ is asymptotic to $P(\lambda \vp)$. Let $A_\lambda$ be an affine subspace which is of maximal dimension among all affine subspaces containing $A'$ and asymptotic to $P(\lambda \vp)$. Then $\{ \lambda_{j_k} \}$, $\{x_k\}$ and $A_\lambda$ satisfy the statement.
\end{proof}

This lemma allows us to define a function $\Phi$ from the set of all cluster points of jumping numbers of $\vp$ to $\bigcup_{m > 0} \asym(m \vp)$ by $\Phi (\lambda) = A_\lambda$.

\begin{theorem}[Main theorem] \label{mainthm}
    Let $\vp$ be a toric psh function on a polydisk $D(0,1) \subseteq \CC^n$. Then the set of cluster points of jumping numbers of $\vp$ at the origin is discrete. Moreover, the set of cluster points of jumping numbers of $\vp$ is given by
    $$
    \{ m \in \RR_+ : \text{there exists $A \in \dasym(m\vp)$ which is does not intersect with $P(m\vp)$}\}.
    $$
\end{theorem}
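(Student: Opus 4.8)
The plan is to establish the two assertions of the theorem separately, both built on the machinery of Lemma~\ref{ClusterToAffine} and its proof. For the discreteness statement, suppose toward a contradiction that $\{\lambda^{(s)}\}_{s=1}^\infty$ is a sequence of distinct cluster points of $\jump(\vp)_o$ converging to some $\lambda_\infty$; without loss of generality (by passing to a subsequence) we may assume it is monotone, say increasing, and bounded. Applying Lemma~\ref{ClusterToAffine} to each $\lambda^{(s)}$ gives an affine subspace $A_{\lambda^{(s)}} = \Phi(\lambda^{(s)}) \in \asym(\lambda^{(s)}\vp)$, defined by a system of equations $x_{i_1}=a_1,\ldots,x_{i_{n-k_s}}=a_{n-k_s}$ with the $a_j$ positive integers. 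Since there are only finitely many coordinate subsets and, crucially, the data $\{x_k+\bfone\}$ extracted in the proof of Lemma~\ref{ClusterToAffine} sit inside $\inter P(\lambda^{(s)}\vp) \subseteq \inter P(\lambda_\infty \vp)$ (for $s$ large, using $\lambda^{(s)} \le \lambda_\infty$ and monotonicity of $P(m\vp)$ in $m$) while being asymptotic to $P(\lambda^{(s)}\vp)$, one sees that the intercepts $a_j$ must be bounded: a point of $A_{\lambda^{(s)}}$ translated by $-\bfone$ meeting $\ZZ^n_+$ and lying in $\inter P(\lambda^{(s)}\vp)$ forces $a_j - 1$ to be at most the width of $P(\lambda^{(s)}\vp)$ in the $i_j$-direction, which is controlled uniformly on the bounded range of parameters. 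Hence, after a further subsequence, all the $A_{\lambda^{(s)}}$ are \emph{the same} affine subspace $A$. But then $A$ is asymptotic to $P(\lambda^{(s)}\vp)$ for infinitely many distinct $\lambda^{(s)}$, which contradicts the fact (to be verified: this is essentially a one-variable convexity/continuity argument in the parameter $m$, as in \cite{KS}) that for a fixed coordinate affine subspace $A$ the set of $m$ for which $A$ is asymptotic to $P(m\vp)$ is a single point — because $P(m\vp) = m\cdot P(\vp)$ scales linearly, so asymptoticity of a \emph{fixed} integer-intercept plane to $mP(\vp)$ pins down $m$.

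**Next I would prove the characterization.** For the inclusion $\subseteq$: let $\lambda$ be a cluster point. By Lemma~\ref{ClusterToAffine} we have $A_\lambda \in \asym(\lambda\vp)$; taking a minimal-dimensional coordinate subspace $A \subseteq A_\lambda$ that still satisfies $(\star)$ for $P(\lambda\vp)$ (such an $A$ exists, being cut out by a subset of the defining equations of $A_\lambda$, and $A_\lambda$ itself works if none smaller does), we get $A \in \dasym(\lambda\vp)$. It remains to see $A \cap P(\lambda\vp) = \emptyset$ and not merely $A \cap \inter P(\lambda\vp) = \emptyset$. This is where the distinction between $\asym$ and $\dasym$ and the role of the sequence $\{x_k\}$ matters: the points $x_k+\bfone$ lie in $\inter P(\lambda_{j_k}\vp)$ but \emph{not} in $\inter P(\lambda_{j_k+1}\vp)$, and $\lambda_{j_k+1}\vp$ has strictly ``larger'' Newton body; letting $k\to\infty$ one shows $x_k+\bfone$ escapes to infinity along $A_\lambda$ and that the gap between $A_\lambda$ and $\partial P(\lambda\vp)$ closes from the \emph{outside}, so $A_\lambda$ — and hence its sub-subspace $A$ — actually avoids the closed body $P(\lambda\vp)$. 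For the reverse inclusion $\supseteq$: given $m$ and $A \in \dasym(m\vp)$ with $A \cap P(m\vp) = \emptyset$, I construct an increasing sequence of jumping numbers converging to $m$. Since $A$ is asymptotic to $P(m\vp)$ there are lattice points $y_k + \bfone$ (with $y_k \in (A+(-\bfone))\cap\ZZ^n_+$) approaching $P(m\vp)$; for each such lattice point, because $y_k+\bfone \notin P(m\vp)$ but $d(A,P(m\vp))=0$ lets us find $m_k < m$ with $y_k+\bfone \in \inter P(m_k\vp) \setminus \inter P(m'\vp)$ for $m'$ slightly larger — this $m_k$ (or rather the associated jumping number of $\vp$, via Theorem~\ref{GueRash}) is a jumping number, and $m_k \to m$ because the lattice points limit onto the boundary of $P(m\vp)$ and $P(\cdot)$ scales continuously. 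One must check the $m_k$ can be taken strictly increasing and strictly below $m$, which follows by choosing the $y_k$ to approach $\partial P(m\vp)$ monotonically along $A$.

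**The main obstacle** I anticipate is the boundedness-of-intercepts step in the discreteness argument together with the ``closed body, not just interior'' upgrade in the forward inclusion of the characterization. The subtlety is that $\asym$ was defined using $A \cap \inter P = \emptyset$ and $d(A,P)=0$, which permits $A$ to touch $\partial P$; the theorem's characterization demands $A \cap P = \emptyset$ for the \emph{minimal} subspace in $\dasym$. Reconciling these requires exploiting that a genuine cluster point produces infinitely many lattice points $x_k+\bfone$ that leave every $\inter P(\lambda_{j}\vp)$ for $j$ large — i.e. that are asymptotically on the wrong side — and then using convexity of $P(\lambda\vp)$ plus the linear scaling $P(\lambda\vp)=\lambda P(\vp)$ to propagate ``outside at infinity'' to ``outside everywhere'' for the relevant coordinate-plane $A$. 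I expect this to mirror, dimension by dimension, the two-axis analysis of \cite[Theorem 5.7]{KS}, with the combinatorial bookkeeping over which coordinates are held constant replacing the simple ``$x$-axis vs.\ $y$-axis'' dichotomy; Lemma~\ref{LargestAsymp} and the inductive structure of Definition~\ref{DefAffine2} are exactly the tools that make this bookkeeping go through.
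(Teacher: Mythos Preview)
Your discreteness argument has a genuine gap at the boundedness step. First, the monotonicity goes the wrong way: since $P(m\vp)=mP(\vp)$ and $P+\RR^n_+\subseteq P$, one has $m_1\le m_2\Rightarrow P(m_1\vp)\supseteq P(m_2\vp)$, so $\inter P(\lambda^{(s)}\vp)\supseteq\inter P(\lambda_\infty\vp)$, not $\subseteq$. More seriously, the intercepts $a_j$ need not be bounded at all. If the projection $Q=\pi(P(\vp))$ onto the relevant coordinate subspace itself has an asymptotic hyperplane (say $\{x_1=c\}$), then for a fixed $m$ with $mc\in\ZZ$ there are infinitely many integer points $(mc,b_2,\dots)$ on $\partial(mQ)$, yielding infinitely many $k$-dimensional coordinate subspaces asymptotic to $P(m\vp)$ with unbounded intercepts. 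Your pigeonhole to a single repeated $A$ therefore does not go through. The paper circumvents exactly this obstruction by exploiting condition~(3) in Definition~\ref{DefAffine2}: those infinitely many $k$-dimensional subspaces are all contained in the single $(k{+}1)$-dimensional subspace $\{x_1=mc\}$, which also satisfies~$(\star)$, so none of them lies in $\asym_k(m\vp)$. The paper's argument is a dimension-climbing induction: given infinitely many $k$-dimensional subspaces satisfying~$(\star)$ for parameters $\le M$, one coordinate of the intercept tuples must repeat along a subsequence (the ``not strictly increasing'' step, as in Lemma~\ref{ClusterToAffine}), producing a higher-dimensional containing subspace that again satisfies~$(\star)$; hence $\bigcup_{0<m\le M}\asym_k(m\vp)$ is finite, and injectivity of $\Phi$ finishes.

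Your forward inclusion of the characterization also misfires. You try to argue directly that $A_\lambda\cap P(\lambda\vp)=\emptyset$ from the behaviour of the $x_k+\bfone$ at infinity, but this can fail: Example~\ref{3dimexample} in the paper exhibits a $2$-plane in $\asym(\vp)$ that does meet $P(\vp)$, while the witnessing lattice points are forced onto two lines inside it. The correct move, which the paper carries out, is not to upgrade $A_\lambda$ but to descend: when $A_m=\Phi(m)$ intersects $P(m\vp)$, rerun the pigeonhole on the sequence $\{x_k\}$ restricted to the free coordinates to find a strictly lower-dimensional coordinate subspace of $A_m$ satisfying~$(\star)$, and iterate until one obtains $A\in\dasym(m\vp)$ with $A\cap P(m\vp)=\emptyset$. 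Your reverse inclusion sketch is fine and is essentially what is implicit in the paper.
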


\begin{remark1}
Note that Theorem~\ref{mainthm} generalizes the case where $n=2$ which was proved in \cite{KS}. When the dimension is $2$, $A(m\vp)$ consists of at most two lines for each $m > 0$ and from this we deduce that the set of cluster points of jumping numbers has of the form:
$$
\{ k/m : k \in \ZZ_+, m \in S\}
$$
where $S$ is a set which consists of at most two positive real numbers.
\end{remark1}

\begin{proof}[Proof of Theorem~\ref{mainthm}]
We shall show that for each $M>0$, the set
$$
\bigcup_{0<m\le M} \asym(m\vp)
$$
has only finitely many elements. Since it is obvious that $\Phi$ is injective, we then have the first statement. Since $\asym(m\vp)$ is the union of $\asym_1(m\vp), \ldots, \asym_{n-1}(m\vp)$, it is enough to show that for each integer $k$ with $1 \le k \le n-1$, the set
$$
\bigcup_{0 < m \le M} \asym_k(m\vp)
$$
is a finite set. To do this, we shall prove that if there are infinitely many $k$-dimensional affine subspaces, each of which satisfies the condition $(\star)$ for $P(m\vp)$ with some $0 < m \le M$, then there exists an affine subspace of dimension more than $k$ satisfying the condition $(\star)$ for $P(m'\vp)$ with $0 < m' \le M$ and containing some of them.

First of all, one can find a sequence of distinct $k$-dimensional affine subspaces $\{A_i\}_{i=1}^{\infty}$ such that after rearranging the order of coordinates, each $A_i$ is given by equations
$$
x_1 = a_{i,1}, \ldots, x_{n-k} = a_{i, n-k},
$$
where all $a_{i,j}$ are positive integers, and asymptotic to $P(m\vp)$ for some $0 < m \le M$.

Now we can associate each $A_i$ to a point $\alpha_i := (a_{i,1}, \ldots, a_{i,n-k})$ in $\ZZ^{n-k}$. Note that the sequence $\{\alpha_i \}_{i=1}^{\infty}$ cannot be strictly increasing. Thus by the same argument in the proof of Lemma~\ref{ClusterToAffine} we can find a subsequence $\{ \beta_i \}_{i=1}^{\infty}$ with abuse of notation such that there exists an index $\el$ such that the $\el$-th coordinates of all $\beta_i$'s are all the same. Denote by $a_1$ the $\el$-th coordinate of $\beta_i$ and we may assume that $\el = 1$.  Then all the affine subspaces $\{B_i\}_{i=1}^{\infty}$ corresponding to $\{\beta_i\}$ are asymptotic to $P(m_0\vp)$ for some $m_0$ with $0< m_0 \le M$, where $m_0$ is independent of $i$. Now the affine subspace given by $x_1 = a_1$ either is asymptotic to $P(m_0\vp)$ or intersects with the interior of $P(m_0\vp)$. If the latter is the case, then we repeat this process to obtain a subsequence $\{\gamma_i\}_{i=1}^{\infty}$ such that the number of indices at which coordinates of all $\gamma_i$'s are constant is maximal. We may suppose that $I = \{1, \ldots, j\}\, (1 \le j < n-k)$ is the set of all indices such that for each $\el \in I$, the $\el$-th coordinates of all $\gamma_i$'s are the same as $a_\el$. Then the affine subspace given by $x_1 = a_1,\ldots ,x_j = a_j$ satisfies the condition $(\star)$ for $P(m_0\vp)$ and has dimension $n-j > k$ as desired.

For the latter, it is enough to show that if $\Phi(m) = A_m \in \asym(m\vp)$ intersects $P(m\vp)$ then we can find an affine subspace $A$ of dimension $k < \dim A_m$ which belongs to $\dasym(m\vp)$. As in Lemma~\ref{ClusterToAffine}, we have an increasing sequence of jumping numbers $\{ m_j \}$ which converges to $m$ and a sequence of points $\{ x_j \}$ in $(A_m + (-\bfone)) \cap \ZZ^n_+$ such that
$$
x_j + \bfone \in \inter P(\lambda_j \vp) \quad \text{but} \quad x_j + \bfone \notin \inter P(\lambda_{j+1} \vp).
$$
After rearranging coordinates, we may assume that $A$ is given by equations
$$
x_1 = a_1, \ldots, x_{n-k} = a_{n-k}.
$$
So we write $x_j = (a, x'_j) \in \RR^{n-k} \times \RR^k$ where $a = (a_1,\ldots, a_{n-k})$. An argument similar to the proof of the first statement shows that one can find a subsequence of $\{x'_j\}$, denoted again by $\{x'_j\}$ for simplicity, such that there exists an index $i$ at which the coordinates of all $x'_j$'s are all equal. This implies that there is an affine subspace of dimension less than $k$ which satisfies the condition $(\star)$ for $P(m\vp)$. If this affine subspace intersects $P(m\vp)$ again, we can repeat this argument to find an affine subspace of smaller dimension. This procedure should stop at least when the dimension reaches $1$. Therefore we have an affine subspace in $\dasym(m\vp)$ satisfying the condition $(\star)$.
\end{proof}

Note that $\asym(m\vp)$ is used in the proof of Theorem~\ref{mainthm} to show the discreteness of cluster points of jumping numbers while we describe the set of cluster points of jumping numbers in terms of $\dasym(m\vp)$. The following example shows the reason that $\dasym(m\vp)$ cannot be replaced by $\asym(m\vp)$ in the second statement of Theorem~\ref{mainthm}.

\begin{example}
Let $P$ be as in Example~\ref{3dimexample} and let $\vp$ be a germ of a toric Siu psh function whose Newton convex body is $P$(see \cite[\textsection 2.3]{KS} for the definition of Siu psh functions). 

As we have seen in Example~\ref{3dimexample}, the affine subspace given by $x=1$ belongs to $\asym(\vp)$ and can be chosen to be $A_1$ in Lemma~\ref{ClusterToAffine}. But in this case $\{x_k\}$ in Lemma~\ref{ClusterToAffine} can exist only on $\{(x,1,1): x \in \RR \} \cup \{(1,y,1) : y \in \RR \}$. Thus we know that there are cluster points of jumping numbers of $\vp$ which are not obtained from the set
$$
\{ m \in \RR_+ : \text{there exists $A \in \asym(m\vp)$ which does not intersect with $P(m\vp)$}\}
$$
since $A$ intersects with $P$.
\end{example}
\footnotesize

\bibliographystyle{amsplain}

\qa

\qa

\normalsize

\noi \textsc{Hoseob Seo}

\noi Department of Mathematical Sciences, Seoul National University

\noi 1 Gwanak-ro, Gwanak-gu, Seoul 08826, Republic of Korea

\noi e-mail: hskoot@snu.ac.kr

\end{document}